\newtheorem{theorem}{Theorem}
\newtheorem{definition}{Definition}
\newtheorem{example}{Example}
\newtheorem{proposition}{Proposition}
\newtheorem{lemma}{Lemma}
\newtheorem{corollary}{Corollary}
\makeatletter\renewcommand{\subsection}{\@startsection{subsection}{1}
{0pt}{3.25ex plus 1ex minus.2ex}{-1em}{\normalfont\normalsize\bf}}\makeatother\linespread{1.3}
\begin{document}

\title{On collectively L-weakly compact sets of operators}
\author{Eduard Emelyanov$^{1}$\\ 
\small $1.$ Sobolev Institute of Mathematics, Novosibirsk, 630090, Russia}
\maketitle

\begin{abstract}
A set of bounded linear operators from a Banach space to a 
Banach lattice is collectively L-weakly compact whenever union of
images of the unit ball is L-weakly compact. We extend the Meyer-Nieberg 
duality theorem to collectively L-weakly compact sets of operators, 
study relations between these sets and collectively almost limited sets, and
discuss the domination problem for collectively compact sets
and collectively L-weakly compact sets.
\end{abstract}

{\bf Keywords:}  L-weakly compact set, almost limited set, Banach lattice, domination problem\\

{\bf MSC 2020:} 46B50, 47B01, 47B07, 47B65

\section{Introduction}

\hspace{5mm}
The theory of L-weakly compact sets and operators was developed by P. Meyer-Nieberg
\cite{Mey1974}. Several versions of this theory were studied recently
by many authors (see \cite{AEG2024,EG2023} and references therein).
We extend the Meyer-Nieberg duality theorem \cite[Satz 3]{Mey1974}
and Theorem 2.6 of \cite{CCJ2014} to collectively L-weakly compact sets of operators,
and shortly discuss the domination problem for collectively compact
and collectively L-weakly compact sets.

Throughout this paper vector spaces are real; operators are linear; 
letters $X$ and $Y$ stand for Banach spaces; 
$E$ and $F$ for Banach lattices; $B_X$ for the closed unit ball of $X$; 
$I_X$ for the identity operator on $X$;
$\text{\rm L}(X,Y)$ (resp., $\text{\rm K}(X,Y)$, $\text{\rm W}(X,Y)$, 
$\text{\rm Lim}(X,Y)$, $\text{\rm aLim}(X,Y)$) 
for the space of bounded (resp., compact, weakly compact, limited, almost limited) 
operators from $X$ to $Y$; 
$$
   \text{\rm absco}(A)=\left\{\sum\limits_{k=1}^{m}\alpha_ka_k:
   \sum\limits_{k=1}^{m}|\alpha_k|\le 1 \ \ \text{\rm and}\ \ a_1,...,a_m\in A\right\}
$$ 
for the absolute convex hull of $A\subseteq X$;
and $\text{\rm sol}(A)=\bigcup\limits_{a\in A}[-|a|,|a|]$ 
for the solid hull of $A\subseteq E$.
We denote the uniform convergence of a sequence of functions  
$(g_n)$ to $g$ on a set $A$ by $g_n\rightrightarrows g(A)$.

A bounded subset $A$ of $Y$ is limited whenever $f_n\rightrightarrows 0(A)$ 
for every \text{\rm w}$^\ast$-null sequence $(f_n)$ in $Y'$. 
J. Bourgain and J. Diestel initiated the study of operators carrying bounded 
sets to limited ones in~\cite{BD1984}, such operators are called limited.
A decade ago J. X. Chen, Z. L. Chen, and G. X. Ji
introduced an almost limited set in a Banach lattice \cite[Definition 2.3]{CCJ2014}, 
which is a disjoint version of the limited set, namely
a bounded $A\subseteq F$ is almost limited whenever $f_n\rightrightarrows 0(A)$ 
for every disjoint \text{\rm w}$^\ast$-null sequence $(f_n)$ in $F'$.

The central notion of the present paper is due to P. Meyer-Nieberg \cite[pp.146-147]{Mey1974}: 
a bounded $A\subseteq F$ is called L-weakly compact 
(briefly, is an \text{\rm Lwc}-set) whenever each disjoint 
sequence in $\text{\rm sol}(A)$ is norm-null;
an operator $T\in\text{\rm L}(X,F)$ is said to 
be L-weakly compact (briefly, $T\in\text{\rm Lwc}(X,F)$) whenever $T(B_X)$ is L-weakly compact.

Following P. M. Anselone and T. W. Palmer \cite{AP1968}, 
we call a subset ${\cal T}$ of $\text{\rm L}(X,Y)$ 
collectively (weakly) compact (collectively (almost) limited, collectively \text{\rm Lwc}) 
if the set ${\cal T}B_X=\bigcup\limits_{T\in{\cal T}}T(B_X)$ 
is relatively (weakly) compact (resp., (almost) limited, \text{\rm Lwc}). 
For convenience, we use bold letters to denote classes of collectively ${\cal P}$-sets of operators
acting from $X$ to $Y$, e.g., $\text{\bf K}(X,Y)$, $\text{\bf W}(X,Y)$, 
$\text{\bf Lim}(X,Y)$, $\text{\bf aLim}(X,F)$. Note that the collectively bounded sets 
of operators of $\text{\bf L}(X,Y)$ are exactly the uniformly bounded sets. 

More generally, collectively ${\cal C-P}$ sets of
operators are defined for a property ${\cal C}$ of subsets of $X$ and 
a property ${\cal P}$ of subsets of $Y$ by saying that a subset ${\cal T}$ of 
$\text{\bf L}(X,Y)$ is collectively ${\cal C}-{\cal P}$ whenever
${\cal T}(C)=\bigcup\limits_{T\in{\cal T}}T(C)$ has the property
${\cal P}$ for each set $C$ with the property ${\cal C}$. 
For example: collectively Gelfand -- Phillips (resp., Bourgain -- Diestel, 
collectively limitedly L-weakly compact) sets 
$\text{\bf GP}(X,Y)$ (resp., $\text{\bf BD}(X,Y)$, $\text{\bf l-Lwc}(X,F)$) correspond 
to the property ${\cal C}$ to be limited and the property 
${\cal P}$ to be relatively compact (resp., relatively weakly compact, \text{\rm Lwc});
collectively order bounded sets $\text{\bf L}_{\text{\bf ob}}(E,F)$
correspond to the properties ${\cal C}$ and ${\cal P}$ to be order bounded; 
etc., (cf. \cite{EG2023,AEG2024}). Clearly, each 
collectively ${\cal C}-{\cal P}$ set consists of ${\cal C}-{\cal P}$ operators,
for example, $T\in{\cal T}\in\text{\bf l-Lwc}(X,F)\Longrightarrow T\in l-\text{\rm Lwc}(X,F)$.

\medskip
For further unexplained notation and terminology, we refer to 
\cite{AB2006,BD1984,CCJ2014,Kus2000,Mey1991}.

\section{Main results}

\hspace{5mm}
We need several technical lemmas, some of them might be already known.
Since we did not find appropriate references for them, we include their proofs.

\begin{lemma}\label{CL-prop}
{\em
The strong closure of the absolute convex hull of a collectively limited (collectively almost limited,
collectively weakly compact) set is collectively limited (resp., collectively almost limited, 
collectively weakly compact).
}
\end{lemma}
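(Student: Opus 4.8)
The plan is to reduce the operator statement to the corresponding stability statement for the single image set $\mathcal{T}B_X$, and then to verify that each of the three set-properties is preserved under passing to absolute convex hulls, to norm closures, and to subsets.

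First I would record the pointwise behaviour of the absolute convex hull. If $S=\sum_{k=1}^{m}\alpha_k T_k$ with $\sum_k|\alpha_k|\le 1$ and $T_k\in\mathcal{T}$, then for $x\in B_X$ we have $Sx=\sum_k\alpha_k(T_k x)\in\text{\rm absco}(\mathcal{T}B_X)$, so $(\text{\rm absco}\,\mathcal{T})B_X\subseteq\text{\rm absco}(\mathcal{T}B_X)$. Passing to the strong-operator closure $\mathcal{S}:=\overline{\text{\rm absco}(\mathcal{T})}$: for $S\in\mathcal{S}$ and $x\in B_X$ there is a net $S_\lambda\in\text{\rm absco}(\mathcal{T})$ with $S_\lambda x\to Sx$ in norm, whence $Sx$ lies in the norm closure of $\text{\rm absco}(\mathcal{T}B_X)$. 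This yields the key inclusion
\[
   \mathcal{S}B_X\ \subseteq\ \overline{\text{\rm absco}(\mathcal{T}B_X)}^{\,\|\cdot\|}.
\]
Since each of the three target properties clearly passes to subsets, it remains to show that $\overline{\text{\rm absco}(A)}^{\,\|\cdot\|}$ has the property in question whenever $A:=\mathcal{T}B_X$ does.

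For the limited and almost limited cases I would argue by a direct estimate. Let $(f_n)$ be a $\text{\rm w}^\ast$-null (resp. disjoint $\text{\rm w}^\ast$-null) sequence in $Y'$; by uniform boundedness $M:=\sup_n\|f_n\|<\infty$. For $y=\sum_k\alpha_k a_k\in\text{\rm absco}(A)$ one has $|f_n(y)|\le\sum_k|\alpha_k|\,|f_n(a_k)|\le\sup_{a\in A}|f_n(a)|$, so $\sup_{\text{\rm absco}(A)}|f_n|\le\sup_{A}|f_n|\to 0$; and if $y$ lies within $\varepsilon$ of $\text{\rm absco}(A)$ in norm, the bound grows only by $M\varepsilon$, which handles the norm closure after letting $n\to\infty$ and then $\varepsilon\to 0$. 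Thus $\overline{\text{\rm absco}(A)}^{\,\|\cdot\|}$ is again limited (resp. almost limited). For the weakly compact case I would instead invoke the Krein--\v{S}mulian theorem: since $A\cup(-A)$ is relatively weakly compact, $\overline{\text{\rm co}}(A\cup(-A))$ is weakly compact, and because $\text{\rm absco}(A)=\text{\rm co}(A\cup(-A))$ for real scalars, the set $\overline{\text{\rm absco}(A)}^{\,\|\cdot\|}$ sits inside this weakly compact set and is therefore relatively weakly compact.

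Combining these, $\overline{\text{\rm absco}(\mathcal{T}B_X)}^{\,\|\cdot\|}$ has the relevant property, and hence so does its subset $\mathcal{S}B_X$; this is exactly the assertion that $\overline{\text{\rm absco}(\mathcal{T})}$ is collectively limited (resp. almost limited, weakly compact). The step requiring the most care is the weakly compact case, where the passage to the absolute convex hull genuinely relies on the Krein--\v{S}mulian theorem rather than on an elementary supremum estimate; the limited and almost limited cases are routine once the norm-boundedness of the relevant $\text{\rm w}^\ast$-null sequences is exploited.
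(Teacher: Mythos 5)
Your proposal is correct and follows essentially the same route as the paper: the key inclusion $\overline{\text{absco}(\mathcal{T})}^{s}B_X\subseteq\overline{\text{absco}(\mathcal{T}B_X)}$ reduces everything to the stability of the three set properties under closed absolute convex hulls, which the paper settles by citing Bourgain--Diestel for limited sets and adapting Anselone--Palmer (via Krein--\v{S}mulian) for weakly compact sets. The only difference is that you prove these stability facts directly (the supremum estimate and the Krein--\v{S}mulian argument) where the paper cites them, so your write-up is a self-contained version of the same argument.
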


\begin{proof}
Let ${\cal T}\in\text{\bf Lim}(X,Y)$.
Since $\overline{{\cal T}}^sB_X\subseteq\overline{{\cal T}B_X}$,
the strong closure $\overline{{\cal T}}^s$ of ${\cal T}$
is collectively limited by \cite[Proposition]{BD1984}.
The collectively almost limited case is similar, and the proof for collectively 
weakly compact sets is a straightforward modification of the proof of
\cite[Proposition 2.1]{AP1968}.
\end{proof}

\noindent
The next lemma is a minor extension of \cite[Corollary 3.6.4~ii)]{Mey1991}.

\begin{lemma}\label{abs convex hull of LWS-sets}
{\em
The absolute convex hull of every \text{\rm Lwc} set is an \text{\rm Lwc} set.
}
\end{lemma}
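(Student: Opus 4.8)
The plan is to deduce the lemma from Meyer-Nieberg's functional description of $\text{Lwc}$ sets, which reduces the statement to an elementary sublinearity estimate. Recall that a bounded set $A\subseteq F$ is $\text{Lwc}$ if and only if $\sup_{a\in A}|f_n|(|a|)\to 0$ for every disjoint sequence $(f_n)$ in $B_{F'}$. I would take this characterization as the starting point; it is the set-theoretic companion of the operator duality theorem we extend afterwards and is available in \cite[Cor.~3.6.4]{Mey1991}, so there is no circularity.

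Fix an $\text{Lwc}$ set $A$ and a disjoint sequence $(f_n)$ in $B_{F'}$. For an arbitrary $y=\sum_{k=1}^{m}\alpha_k a_k\in\text{absco}(A)$ with $\sum_{k}|\alpha_k|\le 1$ and $a_k\in A$, the lattice triangle inequality gives $|y|\le\sum_k|\alpha_k|\,|a_k|$, and applying the positive functional $|f_n|$ yields
\[
   |f_n|(|y|)\le\sum_{k}|\alpha_k|\,|f_n|(|a_k|)\le\Big(\sum_k|\alpha_k|\Big)\sup_{a\in A}|f_n|(|a|)\le\sup_{a\in A}|f_n|(|a|).
\]
Taking the supremum over $y\in\text{absco}(A)$ gives $\sup_{y\in\text{absco}(A)}|f_n|(|y|)\le\sup_{a\in A}|f_n|(|a|)\to 0$, so $\text{absco}(A)$ satisfies the same characterization and is therefore $\text{Lwc}$.

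To present the statement as a genuine extension of \cite[Cor.~3.6.4~ii)]{Mey1991}, I would also record the order-theoretic reduction behind it. Using the Riesz decomposition property one checks the solid-hull inclusion $\text{sol}(\text{absco}(A))\subseteq\text{absco}(\text{sol}(A))$: from $|y|\le\sum_k|\alpha_k|\,|a_k|$ one splits $y=\sum_k v_k$ with $|v_k|\le|\alpha_k|\,|a_k|$, so that (for $\alpha_k\ne 0$) $v_k/\alpha_k\in\text{sol}(A)$ and hence $y\in\text{absco}(\text{sol}(A))$. In particular the absolute convex hull of a solid set is again solid, and since $\text{sol}(A)$ is symmetric the problem reduces to showing that the ordinary convex hull of the solid $\text{Lwc}$ set $\text{sol}(A)$ is $\text{Lwc}$, which is exactly what the estimate above provides.

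The only genuine obstacle is the passage through absolute convex combinations: a disjoint sequence in $\text{sol}(\text{absco}(A))$ need not be built from disjoint sequences in $\text{sol}(A)$, so one cannot argue combinatorially at the level of the defining sequences. Both routes circumvent this by estimating a sublinear quantity, $|f_n|(|\cdot|)$ in the first and the solid-hull description in the second, where the inequality $|{\sum_k\alpha_k a_k}|\le\sum_k|\alpha_k|\,|a_k|$ does the work and the constraint $\sum_k|\alpha_k|\le 1$ prevents the bound from inflating.
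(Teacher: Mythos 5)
Your proof is correct and follows essentially the same route as the paper's: both reduce the lemma to the disjoint-sequence characterization of \text{\rm Lwc} sets against $B_{F'}$ and then conclude with the triangle-inequality estimate $\big|\sum_k\alpha_k a_k\big|\le\sum_k|\alpha_k|\,|a_k|$ over absolute convex combinations, the only difference being that the paper works with the scalar form $\sup_{a\in A}|f_n(a)|\to 0$ from \cite[Theorem 5.63]{AB2006} while you use the equivalent modulus form $\sup_{a\in A}|f_n|(|a|)\to 0$ (your supplementary Riesz-decomposition reduction is sound but not needed for the argument). One minor sourcing remark: the modulus characterization you invoke is Proposition 3.6.2 of \cite{Mey1991} rather than Corollary 3.6.4 (the latter being the hull-permanence statement this lemma extends), though, as you note, no circularity arises either way.
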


\begin{proof}
Let $A$ be an \text{\rm Lwc} set in $E$. 
By \cite[Theorem 5.63]{AB2006}, for proving that $\text{\rm absco}(A)$ 
is an \text{\rm Lwc} set it suffices to show that each disjoint sequence 
in $B_{E'}$ is uniformly null on $\text{\rm absco}(A)$.
So, let $(f_n)$ be disjoint in $B_{E'}$, and let $\varepsilon>0$.
Applying \cite[Theorem 5.63]{AB2006}, we have an $n_\varepsilon$ such
that $\sup\limits_{a\in A}|f_n(a)|\le\varepsilon$ for all $n\ge n_\varepsilon$.
Take an arbitrary $x\in\text{\rm absco}(A)$, say
$x=\sum\limits_{k=1}^{m}\alpha_ka_k$, where 
$\sum\limits_{k=1}^{m}|\alpha_k|\le 1$ and $a_1,...,a_m\in A$. Then
$$
   |f_n(a)|=\Big|\sum\limits_{k=1}^{m}\alpha_kf_n(a_k)\Big|\le
   \sum\limits_{k=1}^{m}|\alpha_k||f_n(a_k)|\le\varepsilon \ \ \ \ \ (n\ge n_\varepsilon),
$$
and hence $f_n\rightrightarrows 0(\text{\rm absco}(A))$ as desired.
\end{proof}

\begin{lemma}\label{closure of LWS-sets}
{\em
The norm closure of every \text{\rm Lwc} set is likewise an \text{\rm Lwc} set.
}
\end{lemma}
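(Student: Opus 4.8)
The plan is to avoid working directly with the solid hull of the closure and instead to use the dual characterization of \text{\rm Lwc} sets already invoked in Lemma~\ref{abs convex hull of LWS-sets}, namely \cite[Theorem 5.63]{AB2006}: a bounded set is \text{\rm Lwc} precisely when every disjoint sequence in $B_{E'}$ is uniformly null on it. So let $A$ be an \text{\rm Lwc} set in $E$. Its norm closure $\overline{A}$ is again bounded, so it remains only to verify the dual condition for $\overline{A}$.

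First I would fix an arbitrary disjoint sequence $(f_n)$ in $B_{E'}$. The key observation is that for each fixed $n$ the function $x\mapsto|f_n(x)|$ is norm-continuous on $E$, and therefore its supremum over $\overline{A}$ coincides with its supremum over $A$; that is, $\sup_{x\in\overline{A}}|f_n(x)|=\sup_{a\in A}|f_n(a)|$ for every $n$. Since $A$ is \text{\rm Lwc}, \cite[Theorem 5.63]{AB2006} yields $\sup_{a\in A}|f_n(a)|\to 0$, so the same holds with $\overline{A}$ in place of $A$, i.e. $f_n\rightrightarrows 0(\overline{A})$.

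As $(f_n)$ was an arbitrary disjoint sequence in $B_{E'}$, applying \cite[Theorem 5.63]{AB2006} once more, now in the reverse direction, shows that $\overline{A}$ is an \text{\rm Lwc} set, which completes the argument.

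There is no genuine obstacle here; the only point requiring care is the continuity step, where one must recall that a norm-continuous functional attains the same supremum of its absolute value over a set and over its norm closure (the nontrivial inequality follows by approximating each $x\in\overline{A}$ by a sequence from $A$). A direct argument via disjoint sequences in $\text{\rm sol}(\overline{A})$ is also conceivable, but it would force one to compare $\text{\rm sol}(\overline{A})$ with $\text{\rm sol}(A)$ and to preserve disjointness under norm approximation, which is more delicate; routing through the dual characterization sidesteps these complications entirely.
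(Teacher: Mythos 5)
Your proof is correct, and it takes a genuinely different route from the paper's. The paper argues directly from the definition: it takes a disjoint sequence $(\bar{x}_n)$ in $\mathrm{sol}(\overline{A})$, approximates the dominating elements $\bar{a}_n\in\overline{A}$ by elements $a_n\in A$, and passes to the sequence $(|\bar{x}_n|\wedge|a_n|)$, which lies in $\mathrm{sol}(A)$ and is still disjoint because $0\le|\bar{x}_n|\wedge|a_n|\le|\bar{x}_n|$; the Birkhoff-type inequality $\bigl\||\bar{x}_n|\wedge|\bar{a}_n|-|\bar{x}_n|\wedge|a_n|\bigr\|\le\|\bar{a}_n-a_n\|$ then yields a contradiction with the assumption that $\|\bar{x}_n\|\not\to 0$. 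This is exactly the ``more delicate'' direct argument you anticipated and chose to avoid; the paper handles the disjointness-preservation issue not by perturbing the sequence but by meeting the approximants against the original disjoint sequence. Your route instead invokes the dual characterization \cite[Theorem 5.63]{AB2006} (a bounded set is Lwc iff every disjoint sequence in $B_{E'}$ is uniformly null on it) and reduces the lemma to the elementary fact that a norm-continuous function has the same supremum over a set and over its norm closure. Your argument is shorter and essentially computation-free, and it is consistent with how the paper already uses Theorem 5.63 in Lemma~\ref{abs convex hull of LWS-sets}; what the paper's proof buys in exchange is self-containedness at the level of the definition, using only elementary lattice inequalities rather than the (nontrivial) duality theorem.
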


\begin{proof}
Let $A$ be an \text{\rm Lwc} set in $E$, and let $(\bar{x}_n)$ be a disjoint sequence in 
the solid hull $\text{\rm sol}(\bar{A})$ of the norm closure $\bar{A}$ of $A$. 
Suppose, in contrary, $(\bar{x}_n)$ is not norm null. WLOG, $\|\bar{x}_n\|\ge\varepsilon$
for some $\varepsilon>0$ and all $n$.
For every $n$, pick $\bar{a}_n\in\bar{A}$ such that $|\bar{x}_n|\in[0,|\bar{a}_n|]$ and 
choose $a_n\in A$ satisfying $\|\bar{a}_n-a_n\|\le 2^{-n}$. Then, for all $n$,
$$
   \||\bar{x}_n|\wedge|\bar{a}_n|\|-\||\bar{x}_n|\wedge|a_n|\|\le
   \||\bar{x}_n|\wedge|\bar{a}_n|-|\bar{x}_n|\wedge|a_n|\|\le
   \||\bar{a}_n|-|a_n|\|\le\|\bar{a}_n-a_n\|\le 2^{-n}.
$$
Since $(|\bar{x}_n|\wedge|a_n|)$ is disjoint in 
$\text{\rm sol}(A)$ then $\||\bar{x}_n|\wedge|a_n|\|\to 0$
which is absurd as
$$
   \||\bar{x}_n|\wedge|a_n|\|+2^{-n}\ge\||\bar{x}_n|\wedge|\bar{a}_n|\|=
   \||\bar{x}_n|\|=\|\bar{x}_n\|\ge\varepsilon
$$
for all $n$. The obtained contradiction completes the proof.
\end{proof}

\medskip
Now, we are in the position to prove an \text{\rm Lwc}-version
of \cite[Proposition 2.1]{AP1968}.

\begin{proposition}\label{CLWC-prop}
{\em
The strong closure of absolute convex hull of
a collectively \text{\rm Lwc} set is likewise collectively \text{\rm Lwc}.
}
\end{proposition}

\begin{proof}
Let ${\cal T}\in\text{\bf Lwc}(X,F)$. 
Since $\overline{\text{\rm absco}({\cal T})}^sB_X\subseteq\overline{\text{\rm absco}({\cal T})B_X}$,
$\overline{\text{\rm absco}({\cal T})}^s$ is collectively \text{\rm Lwc}
by Lemmas \ref{abs convex hull of LWS-sets} and \ref{closure of LWS-sets}.
\end{proof}

\medskip
The following result provides collectively (almost) limited,  weakly compact, and 
\text{\rm Lwc} versions of \cite[Propositions 2.2]{AP1968}.
We suppose that $\lambda$ is the Lebesgue measure on the $\sigma$-algebra of measurable subsets of 
$[a,b]\subset\mathbb{R}$; $K_\xi:[a,b]\to\text{\rm L}(X,Y)$ is a $\lambda$-measurable function
(cf., \cite[Definition 3.9.12]{Mey1991}); and 
if $K_\xi$ is Bochner integrable then $\int_a^bK_\xi(t)dt$ is the Bochner integral
(cf., \cite[Def.3.9.15]{Mey1991}).

\begin{proposition}\label{CLWC-int}
{\em
With the foregoing notation, let $K_\xi:[a,b]\to\text{\rm L}(X,Y)$ be Bochner integrable for $\xi\in\Xi$.
The following statements hold:
\begin{enumerate}[(i)]
\item 
If $\{K_\xi(t)\}_{\xi\in\Xi, a\le t\le b}$ is collectively (almost) limited (collectively weakly compact)
in $\text{\rm L}(X,Y)$ then 
$\Big\{\int_a^bK_\xi(t)dt\Big\}_{\xi\in\Xi}$ is likewise collectively (almost) limited 
(resp., collectively weakly compact).
\item 
If $\{K_\xi(t)\}_{\xi\in\Xi, a\le t\le b}\in\text{\bf Lwc}(X,F)$ then $\Big\{\int_a^bK_\xi(t)dt\Big\}_{\xi\in\Xi}\in\text{\bf Lwc}(X,F)$.
\end{enumerate}
}
\end{proposition}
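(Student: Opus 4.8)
The plan is to reduce both statements to the stability of the relevant set-properties under passing to the norm-closed absolute convex hull, following the scheme of \cite[Proposition 2.2]{AP1968}. Write $\mathcal{T}:=\{K_\xi(t)\}_{\xi\in\Xi,\,a\le t\le b}$ for the collective set of values and $S_\xi:=\int_a^bK_\xi(t)\,dt$ for the operators in question. The first observation is that, for fixed $x\in X$, evaluation $\mathrm{ev}_x:\text{\rm L}(X,Y)\to Y$, $T\mapsto Tx$, is a bounded operator with $\|\mathrm{ev}_x\|\le\|x\|$; since bounded operators commute with the Bochner integral, $t\mapsto K_\xi(t)x$ is Bochner integrable and $S_\xi x=\int_a^bK_\xi(t)x\,dt$ for every $\xi\in\Xi$ and $x\in X$.

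The key step is a mean-value containment for the Bochner integral. If $x\in B_X$, then $K_\xi(t)x\in\mathcal{T}B_X$ for all $t\in[a,b]$, so the integrand takes its values in the closed convex set $\overline{\operatorname{co}}(\mathcal{T}B_X)$; since $\lambda([a,b])=b-a<\infty$, a standard Hahn--Banach separation argument then gives
$$
   \frac{1}{b-a}\,S_\xi x=\frac{1}{b-a}\int_a^bK_\xi(t)x\,dt\in\overline{\operatorname{co}}(\mathcal{T}B_X)\qquad(x\in B_X,\ \xi\in\Xi).
$$
Hence $\{S_\xi\}_{\xi\in\Xi}B_X\subseteq(b-a)\,\overline{\operatorname{co}}(\mathcal{T}B_X)\subseteq(b-a)\,\overline{\operatorname{absco}}(\mathcal{T}B_X)$.

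To finish, I would use that each of the four properties passes to subsets, is preserved by multiplication with a scalar, and is stable under forming the norm-closed absolute convex hull: for the limited and almost limited cases by \cite[Proposition]{BD1984} together with the easy fact that the norm closure of an (almost) limited set is again (almost) limited (a supremum over a set and over its closure agree for a fixed continuous functional); for the weakly compact case by the Krein--Smulian theorem \cite{AB2006}; and for the \text{\rm Lwc} case by Lemmas \ref{abs convex hull of LWS-sets} and \ref{closure of LWS-sets}. Since $\mathcal{T}B_X$ carries the relevant property by hypothesis, so does $(b-a)\,\overline{\operatorname{absco}}(\mathcal{T}B_X)$, and therefore so does its subset $\{S_\xi\}_{\xi\in\Xi}B_X$; this proves (i) and (ii) at once. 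I expect the only real obstacle to be the displayed mean-value containment: one must confirm both the Bochner integrability of $t\mapsto K_\xi(t)x$ (which is inherited from $K_\xi$ through $\mathrm{ev}_x$) and that the normalized integral cannot escape $\overline{\operatorname{co}}(\mathcal{T}B_X)$, the point where finiteness of $\lambda$ on $[a,b]$ and the separation of a point from a closed convex set are used; the remaining bookkeeping with the stability results is routine.
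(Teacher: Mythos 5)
Your proof is correct and follows essentially the same route as the paper: the paper's one-line proof invokes Lemma \ref{CL-prop} and Proposition \ref{CLWC-prop} (stability of the collective properties under strongly closed absolutely convex hulls), with the Anselone--Palmer mean-value containment of the Bochner integral left implicit --- precisely the containment you spell out pointwise via Hahn--Banach separation. The only difference is bookkeeping: you work directly with the norm-closed absolutely convex hull of ${\cal T}B_X$ in $Y$ (together with the harmless scaling by $b-a$), whereas the paper packages the same stability facts as statements about the sets of operators themselves.
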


\begin{proof}
(i) follows from Lemma \ref{CL-prop}, whereas (ii) is a consequence 
of Proposition \ref{CLWC-prop}.
\end{proof}

\medskip
Each element of a collectively (almost) limited (collectively weakly compact, collectively \text{\rm Lwc})
set is an (almost) limited (resp., weakly compact, \text{\rm Lwc}) operator.
The converse fails as can be seen by considering the set of all normalized rank one operators in $\ell^1$.
However, similarly to \cite[Theorem 2.5]{AP1968} we have:

\begin{theorem}\label{CLWC-compact}
{\em
Every totally bounded set of (almost) limited (weakly compact, \text{\rm Lwc})
operators is collectively (almost) limited 
(resp., collectively weakly compact, collectively \text{\rm Lwc}).
}
\end{theorem}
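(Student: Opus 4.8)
The plan is to treat the three ``functional'' cases---(almost) limited and $\text{\rm Lwc}$---by one uniform scheme, and to handle the weakly compact case separately, since it is the only one lacking a description through uniform convergence of a sequence of functionals. Throughout I write ${\cal T}$ for the given totally bounded set; as ${\cal T}$ is bounded in $\text{\rm L}(X,Y)$, the set ${\cal T}B_X$ is bounded, so only the asymptotic condition defining each property must be verified.

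For the common scheme I would exploit that each of these three properties is governed by a sequence $(f_n)$ of functionals converging uniformly to $0$ on the set: for limited sets $(f_n)$ ranges over $\text{\rm w}^\ast$-null sequences in $Y'$; for almost limited sets over disjoint $\text{\rm w}^\ast$-null sequences in $F'$; and for $\text{\rm Lwc}$ sets, by \cite[Theorem 5.63]{AB2006}, over disjoint sequences in $B_{F'}$. In every case the relevant sequences are norm bounded, say $\|f_n\|\le M$. Fix such an $(f_n)$ and let $\varepsilon>0$. By total boundedness choose an $\varepsilon$-net $T_1,\dots,T_m$ of ${\cal T}$ in operator norm. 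Since a finite union of sets with the property in question again has that property, $\bigcup_{i=1}^m T_i(B_X)$ carries it, so there is an $n_\varepsilon$ with $\sup_i\sup_{x\in B_X}|f_n(T_ix)|\le\varepsilon$ for all $n\ge n_\varepsilon$. Given $T\in{\cal T}$ and $x\in B_X$, picking $T_i$ with $\|T-T_i\|\le\varepsilon$ yields
$$
   |f_n(Tx)|\le|f_n((T-T_i)x)|+|f_n(T_ix)|\le M\varepsilon+\varepsilon
   \qquad(n\ge n_\varepsilon).
$$
Hence $\sup_{y\in{\cal T}B_X}|f_n(y)|\le(M+1)\varepsilon$ for $n\ge n_\varepsilon$, and since $\varepsilon$ is arbitrary, $f_n\rightrightarrows 0({\cal T}B_X)$. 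This settles the (almost) limited and $\text{\rm Lwc}$ conclusions at once.

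For the weakly compact case the two-term estimate is unavailable, and I would instead use the approximation description of relative weak compactness: a bounded set is relatively weakly compact iff for every $\varepsilon>0$ it is contained in $W+\varepsilon B_Y$ for some weakly compact $W$ (equivalently, its de Blasi measure of weak non-compactness vanishes). With $T_1,\dots,T_m$ as above, the finite union $\bigcup_{i=1}^m T_i(B_X)$ is relatively weakly compact; set $W=\overline{\bigcup_{i=1}^m T_i(B_X)}^{\,w}$. The inequality $\|T-T_i\|\le\varepsilon$ gives $T(B_X)\subseteq T_i(B_X)+\varepsilon B_Y$, whence ${\cal T}B_X\subseteq W+\varepsilon B_Y$. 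As $\varepsilon>0$ is arbitrary, ${\cal T}B_X$ is relatively weakly compact, i.e.\ ${\cal T}$ is collectively weakly compact.

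The main obstacle is precisely this weakly compact case: the uniform two-term argument driving the other three relies on testing against a norm-bounded sequence of functionals, which has no direct analogue for weak compactness. The remedy is to replace ``approximation in norm by a finite union'' with ``approximation by a weakly compact set up to $\varepsilon B_Y$''---that is, to pass from total boundedness of ${\cal T}$ to the vanishing of the de Blasi measure of ${\cal T}B_X$. Verifying that a finite union of relatively weakly compact sets is relatively weakly compact and invoking this approximation criterion are the only nontrivial ingredients.
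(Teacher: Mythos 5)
Your proof is correct, and for the one case the paper actually writes out (the Lwc case) it is essentially the paper's argument with the details filled in: the paper compresses the whole $\varepsilon$-net step into a citation of the equivalence $i)\Longleftrightarrow iii)$ of \cite[Proposition 3.6.2]{Mey1991} (followed by an appeal to Proposition \ref{CLWC-prop}), while you make the mechanism explicit via \cite[Theorem 5.63]{AB2006} and the two-term estimate $|f_n(Tx)|\le|f_n((T-T_i)x)|+|f_n(T_ix)|\le M\varepsilon+\varepsilon$; the same scheme covers the limited and almost limited cases, which the paper leaves entirely to the reader. Your genuine addition is the weakly compact case: the paper again leaves it to the reader, implicitly suggesting an adaptation of \cite[Theorem 2.5]{AP1968}, but the compact-operator proof there rests on the fact that a totally bounded subset of a complete space is relatively compact, which has no verbatim weak-topology analogue; you correctly identify the needed substitute, namely Grothendieck's approximation criterion (a bounded set contained in $W+\varepsilon B_Y$ with $W$ weakly compact for every $\varepsilon>0$ is relatively weakly compact; this classical lemma can be found in \cite{AB2006}), which is exactly what makes the $\varepsilon$-net argument survive. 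Two points you should state explicitly to make the write-up airtight: the net $T_1,\dots,T_m$ must be chosen \emph{inside} ${\cal T}$ (always possible for a totally bounded set at the cost of doubling $\varepsilon$), since you need each $T_i$ to lie in the relevant operator class so that the finite union $\bigcup_{i=1}^m T_i(B_X)$ inherits the property; and the bound $\|f_n\|\le M$ in the limited and almost limited cases should be justified by the uniform boundedness principle applied to the weak$^\ast$-null sequence $(f_n)$.
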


\begin{proof}
We include the proof only for \text{\rm Lwc} operators. Remaining cases are left for the reader.
So, let ${\cal T}\subseteq\text{\rm Lwc}(X,F)$ be totally bounded. 
By the equivalence $i)\Longleftrightarrow iii)$ of
\cite[Proposition 3.6.2]{Mey1991}, we obtain that ${\cal T}B_X$ is an \text{\rm Lwc} set.
Now, the applying of Proposition~\ref{CLWC-prop} completes the proof.
\end{proof} 
\noindent
The converse of Theorem \ref{CLWC-compact} is false. 
The following two examples are motivated by \cite[Example 2.6]{AP1968}.

\begin{example}\label{dual-in-example}
{\em 
Let $T_n\in\text{\rm L}(\ell^1)$ be defined by $T_nx=x_ne_1$, where 
$x=\sum\limits_{k=1}^\infty x_k e_k\in\ell^1$ and $e_k$ is the k-th standard unit vector in $\ell^1$.
Clearly, the set $\{T_n\}_{n=1}^\infty$ is collectively \text{\rm Lwc} (compact, weakly compact, and limited). But $\{T_n\}_{n=1}^\infty$ is not totally bounded,
for $\|T_n-T_m\|=2$ with $n\ne m$. 

The adjoint set
$\{T'_n\}_{n=1}^\infty\subset\text{\rm L}(\ell^\infty)$ is collectively limited,
yet not collectively compact. Indeed, 
$$
   (T'_nf)x=f(T_nx)=f(x_ne_1)=x_nf(e_1)=x_nf_1 \ \ \ \ \ (f\in\ell^\infty, x\in\ell^1),
$$
and hence $T'_nf=f_1e_n$. Thus,
$
  \bigcup\limits_{n\in\mathbb{N}}T'_n\bigl(B_{\ell^\infty}\bigl)=
  \bigcup\limits_{n\in\mathbb{N}}[-e_n,e_n]\subset c_0
$. 
Therefore, $\{T'_n\}_{n=1}^\infty$ is collectively limited due to
Phillip’s lemma (cf. \cite[Theorem~4.67]{AB2006}).
Since $e_n\in T'_nB_{\ell^\infty}$ for every $n$,
the set $\{T'_n\}_{n=1}^\infty$ is not collectively compact.

Note that, since the set $\{T_n\}_{n=1}^\infty$ is collectively \text{\rm Lwc}
then $\{T'_n\}_{n=1}^\infty$ is collectively \text{\rm Mwc} by
Theorem \ref{CLW-MW-duality}. Thus, $\{T'_n\}_{n=1}^\infty$ is collectively
weakly compact by Corollary \ref{CLW-MW-are-CWC}.

Now, let $S_n\in\text{\rm L}(c_0)$ be defined similarly by $S_nx=x_ne_1$.
Again, $\{S_n\}_{n=1}^\infty$ is collectively compact.
However, in this case $\{S'_n\}_{n=1}^\infty\subset\text{\rm L}(\ell^1)$ is not collectively
weakly compact, since 
$
  \{e_n\}_{n=1}^\infty\subset\bigcup\limits_{n\in\mathbb{N}}[-e_n,e_n]=
  \bigcup\limits_{n\in\mathbb{N}}S'_n\bigl(B_{\ell^1}\bigl)
$
and the subset $\{e_n\}_{n=1}^\infty$ of $\ell^1$ is
not relatively weakly compact.}
\end{example}

\begin{example}\label{example}
{\em 
Let $\dim(F^a)=\infty$, where $F^a$ is the order continuous part of $F$. 
Take a disjoint normalized sequence $(u_n)$ in $F^a_+$
and define rank one operators $T_n:\ell^\infty\to F$ by $T_nx=x_nu_n$. 
Every $T_n$ is an \text{\rm Lwc} operator since $T_n(\ell^\infty)\subseteq F^a$. 
However, ${\cal T}=\{T_n\}_{n=1}^{\infty}\notin\text{\bf Lwc}(\ell^\infty,F)$.
Indeed, for the bounded set $A=\{e_n\}_{n=1}^{\infty}\subset\ell^\infty$,
the set ${\cal T}A=\{0\}\cup\{u_n\}_{n=1}^{\infty}$ is not \text{\rm Lwc}.}
\end{example}

\medskip
By the Meyer-Nieberg result \cite[Satz 3]{Mey1974}, 
the L- and M-weakly compact operators are in duality to each other
(cf. \cite[Proposition 3.6.11]{Mey1991}, \cite[Theorem 5.64]{AB2006}).
We need the following collective analogue of the notion of \text{\rm Mwc} operators.

\begin{definition}\label{Mwc-sets}
{\em
A bounded subset ${\cal T}$ of $\text{\rm L}(E,Y)$ is said to be collectively M-weakly compact 
(briefly, collectively \text{\rm Mwc}, or ${\cal T}\in\text{\bf Mwc}(E,Y)$) 
if $\lim\limits_{n\to\infty}\sup\limits_{T\in{\cal T}}\|Tx_n\|=0$
holds for every norm bounded disjoint sequence $(x_n)$ of $E$.
}
\end{definition}

\noindent
The following result is a generalization of the Meyer-Nieberg 
duality theorem to collectively L(M)-weakly compact sets.

\begin{theorem}\label{CLW-MW-duality}
{\em 
The following statements hold:
\begin{enumerate}[(i)]
\item 
${\cal T}\in\text{\bf Lwc}(X,F)\Longleftrightarrow{\cal T}'=\{T':T\in{\cal T}\}\in\text{\bf Mwc}(F',X')$.
\item 
${\cal T}\in\text{\bf Mwc}(E,Y)\Longleftrightarrow{\cal T}'\in\text{\bf Lwc}(Y',E')$.
\end{enumerate}
}
\end{theorem}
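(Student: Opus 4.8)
The plan is to reduce both equivalences to the single scalar identity
\[
   \sup_{T\in{\cal T}}\|T'f\|=\sup_{y\in{\cal T}B_X}|f(y)|\qquad(f\in F'),
\]
which is immediate from $\|T'f\|=\sup_{x\in B_X}|f(Tx)|$. For (i) I would read the right-hand side through \cite[Theorem 5.63]{AB2006}, exactly as in the proof of Lemma \ref{abs convex hull of LWS-sets}: that result says ${\cal T}B_X$ is an \text{\rm Lwc} set precisely when every disjoint sequence in $B_{F'}$ is uniformly null on ${\cal T}B_X$, i.e. $\sup_{y\in{\cal T}B_X}|f_n(y)|\to 0$. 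By the identity this is the same as $\sup_{T\in{\cal T}}\|T'f_n\|\to 0$, and since a norm bounded disjoint sequence becomes a disjoint sequence in $B_{F'}$ after rescaling, requiring this for all disjoint $(f_n)\subseteq B_{F'}$ is equivalent to the defining condition of ${\cal T}'\in\text{\bf Mwc}(F',X')$. Both implications of (i) then drop out at once.

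For the easy half of (ii), namely ${\cal T}'\in\text{\bf Lwc}(Y',E')\Rightarrow{\cal T}\in\text{\bf Mwc}(E,Y)$, I would simply apply (i) to the set ${\cal T}'\subseteq\text{\rm L}(Y',E')$, taking the Banach space to be $Y'$ and the Banach lattice to be $E'$. This yields ${\cal T}'\in\text{\bf Lwc}(Y',E')\Leftrightarrow{\cal T}''\in\text{\bf Mwc}(E'',Y'')$, and restricting the latter condition to the canonical image of $E$ in $E''$ finishes the job: a norm bounded disjoint sequence of $E$ embeds, via the isometric lattice embedding $E\hookrightarrow E''$, into a norm bounded disjoint sequence of $E''$, and $T''$ agrees with $T$ there.

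The substantive direction is ${\cal T}\in\text{\bf Mwc}(E,Y)\Rightarrow{\cal T}'\in\text{\bf Lwc}(Y',E')$, which I would attack directly from the definition of an \text{\rm Lwc} set rather than through the bidual. Let $(\psi_n)$ be disjoint in $\text{\rm sol}({\cal T}'B_{Y'})$, so that $|\psi_n|\le|T'_ng_n|$ for suitable $T_n\in{\cal T}$ and $g_n\in B_{Y'}$; the goal is $\|\psi_n\|\to 0$. First I would invoke the standard disjointification lemma for dual Banach lattices (cf. \cite{AB2006,Mey1991}) to choose a disjoint sequence $(x_n)\subseteq B_E$ with $x_n\ge 0$ and $|\psi_n|(x_n)\ge\|\psi_n\|-1/n$. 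Then $\|\psi_n\|\le|T'_ng_n|(x_n)+1/n$, and the Riesz--Kantorovich formula $|T'_ng_n|(x_n)=\sup_{|v|\le x_n}|\langle T_nv,g_n\rangle|$ lets me pick $v_n$ with $|v_n|\le x_n$ and $\|T_nv_n\|\ge|\langle T_nv_n,g_n\rangle|\ge|T'_ng_n|(x_n)-1/n\ge\|\psi_n\|-2/n$. Since $|v_n|\le x_n$ and $(x_n)$ is disjoint, $(v_n)$ is a norm bounded disjoint sequence in $E$, so collective M-weak compactness gives $\sup_{T\in{\cal T}}\|Tv_n\|\to 0$; hence $\|\psi_n\|\le\|T_nv_n\|+2/n\to 0$.

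The main obstacle is precisely this disjointification step. Collective M-weak compactness only tests disjoint sequences living in the domain $E$, whereas the $\psi_n$ live in $E'$, so the whole argument hinges on transferring the disjoint norm-carrying data from $E'$ back to a genuinely disjoint, norm bounded sequence $(v_n)$ in $E$ on which the hypothesis can act; keeping the transferred sequence disjoint (not merely bounded) is the delicate point. Equivalently, via (i) the implication reduces to ${\cal T}\in\text{\bf Mwc}(E,Y)\Rightarrow{\cal T}''\in\text{\bf Mwc}(E'',Y'')$, the bidual-lifting formulation of the same difficulty, and this is exactly where the depth of the single-operator Meyer--Nieberg theorem is concentrated; the collective version adds only that the estimates be taken uniformly over ${\cal T}$, which the $\sup_{T\in{\cal T}}$ bookkeeping above already carries.
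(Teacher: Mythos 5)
Your part (i) is correct and is essentially the paper's own argument: both read the L-weak compactness of ${\cal T}B_X$ through \cite[Theorem 5.63]{AB2006} and the identity $\sup_{T\in{\cal T}}\|T'f\|=\sup_{y\in{\cal T}B_X}|f(y)|$. Your bidual argument for ${\cal T}'\in\text{\bf Lwc}(Y',E')\Rightarrow{\cal T}\in\text{\bf Mwc}(E,Y)$ is also sound, since the canonical embedding $E\hookrightarrow E''$ is a disjointness-preserving isometry and $T''$ restricts to $T$. The genuine gap is in the substantive direction ${\cal T}\in\text{\bf Mwc}(E,Y)\Rightarrow{\cal T}'\in\text{\bf Lwc}(Y',E')$, where you invoke a ``standard disjointification lemma for dual Banach lattices'': given a disjoint sequence $(\psi_n)$ in $E'$, choose a disjoint sequence $(x_n)\subseteq B^+_E$ with $|\psi_n|(x_n)\ge\|\psi_n\|-1/n$. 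No such lemma appears in \cite{AB2006} or \cite{Mey1991}, and the statement is false. Take $E=C[0,1]$ and $\psi_n=\delta_{q_n}$, the point evaluations at a dense sequence $(q_n)$ in $[0,1]$: these are pairwise disjoint (mutually singular) norm-one functionals, yet if $(x_n)\subseteq B^+_E$ were pairwise disjoint with $x_n(q_n)>0$ for all $n$, the open sets $U_n=\{t:x_n(t)>0\}$ would be pairwise disjoint with $q_n\in U_n$, which is impossible: the nonempty open set $U_1$ contains some $q_m$ with $m\ne 1$ by density, forcing $U_1\cap U_m\ne\emptyset$. So one cannot even arrange $|\psi_n|(x_n)>0$ for every $n$, let alone near-norming.

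A subsequence form of your step (pass to $(\psi_{n_k})$ and find disjoint near-norming $(x_k)$) would indeed suffice for your contradiction argument, but establishing it is exactly the hard content of the Meyer-Nieberg duality \cite{Mey1974}; it is not a quotable standard fact, so as written your proof of this direction assumes what is to be proved. Your closing paragraph acknowledges that this is ``where the depth is concentrated,'' but that acknowledgement does not discharge the step. The paper sidesteps the difficulty entirely: \cite[Theorem 5.63]{AB2006} contains a second statement, for subsets of a dual lattice, asserting that a bounded set $B\subseteq E'$ is an \text{\rm Lwc} set if and only if $\sup_{f\in B}|f(x_n)|\to 0$ for every disjoint sequence $(x_n)$ in $B_E$ --- disjoint sequences taken in the predual $E$, not in $E''$, with no disjointification required. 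With that citation, (ii) collapses to the one-line identity $\sup_{f\in{\cal T}'B_{Y'}}|f(x_n)|=\sup_{T\in{\cal T}}\|Tx_n\|$, which yields both implications simultaneously; this is precisely the paper's proof. If you wish to keep your direct attack instead, you must actually prove a subsequence disjointification via the Rosenthal-type disjoint-sequence machinery underlying \cite[Theorem 5.63]{AB2006}, which amounts to reproving that half of the theorem rather than citing it.
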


\begin{proof}
(i)\
${\cal T}\in\text{\bf Lwc}(X,F)$ iff $\bigcup\limits_{T\in{\cal T}}T(B_X)$ is bounded and
every disjoint sequence of $\text{\rm sol}\left(\bigcup\limits_{T\in{\cal T}}T(B_X)\right)$
is uniformly null on $B_{F'}$ iff ${\cal T}$ is bounded and
$f_n\rightrightarrows 0\left(\bigcup\limits_{T\in{\cal T}}T(B_X)\right)$
for every disjoint $(f_n)$ in $B_{F'}$
iff ${\cal T}'$ is bounded and $\lim\limits_{n\to\infty}\sup\limits_{T\in{\cal T}}\|T'f_n\|=0$
holds for every disjoint $(f_n)$ of $B_{F'}$ by \cite[Theorem.5.63]{AB2006} 
iff ${\cal T}'\in\text{\bf Mwc}(F',X')$.

(ii)\
${\cal T}\in\text{\bf Mwc}(E,Y)$ iff  ${\cal T}$ is bounded and 	
$\lim\limits_{n\to\infty}\sup\limits_{T\in{\cal T}}\|Tx_n\|=0$
for every disjoint sequence $(x_n)$ in $B_E$ iff
$x_n\rightrightarrows 0\left(\bigcup\limits_{T\in{\cal T}}T'(B_{Y'})\right)$ 
for every disjoint $(x_n)$ in $B_E$, and by 
\cite[Theorem 5.63]{AB2006}, iff every disjoint sequence of 
$\text{\rm sol}\left(\bigcup\limits_{T\in{\cal T}}T'(B_{Y'})\right)$
is norm null, iff ${\cal T}'\in\text{\bf Lwc}(Y',E')$.
\end{proof}
\noindent
The following corollary generalizes another Meyer-Nieberg result (cf. \cite[Theorem 5.61]{AB2006})
to collectively L(M)-weakly compact sets.

\begin{corollary}\label{CLW-MW-are-CWC}
{\em 
Every collectively \text{\rm Lwc} (\text{\rm Mwc}) set is 
collectively weakly compact.
}
\end{corollary}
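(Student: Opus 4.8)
The plan is to split the statement into its two halves and to reduce the collectively \text{\rm Mwc} case to the collectively \text{\rm Lwc} case by means of the duality already established in Theorem~\ref{CLW-MW-duality}.

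The collectively \text{\rm Lwc} case is essentially a statement about a single set. If ${\cal T}\in\text{\bf Lwc}(X,F)$ then, by definition, ${\cal T}B_X$ is an \text{\rm Lwc} set, and every \text{\rm Lwc} set is relatively weakly compact (Meyer--Nieberg; cf. \cite[Theorem 5.61]{AB2006}). Hence ${\cal T}B_X$ is relatively weakly compact, which is exactly what it means for ${\cal T}$ to be collectively weakly compact. No genuinely ``collective'' reasoning enters here, because collective weak compactness of ${\cal T}$ is literally the relative weak compactness of the single set ${\cal T}B_X$.

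For the collectively \text{\rm Mwc} case I would argue by duality. Let ${\cal T}\in\text{\bf Mwc}(E,Y)$. By Theorem~\ref{CLW-MW-duality}(ii) the adjoint family ${\cal T}'$ lies in $\text{\bf Lwc}(Y',E')$, so by the case just treated ${\cal T}'$ is collectively weakly compact; that is, $\bigcup_{T\in{\cal T}}T'(B_{Y'})$ is relatively weakly compact in $E'$. It then remains to return from ${\cal T}'$ to ${\cal T}$, i.e. to deduce that ${\cal T}B_E=\bigcup_{T\in{\cal T}}T(B_E)$ is relatively weakly compact in $Y$. For a single operator this is precisely Gantmacher's theorem, so the natural attempt is a ``collective Gantmacher theorem''.

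This last transfer is the step I expect to be the main obstacle, and it is more delicate than the single-operator case. Relative weak compactness of $\bigcup_{T\in{\cal T}}T'(B_{Y'})$ yields, via Krein's theorem and the Davis--Figiel--Johnson--Pe{\l}czy\'nski factorization, a single reflexive space through which every $T'$ factors with uniformly bounded factors; but uniform boundedness of a family of operators out of a reflexive space is not enough to make the union of the images of a bounded set relatively weakly compact, so the factorization alone cannot close the argument, and the collective hypothesis must be used more directly. Concretely, I would instead argue by contradiction at the level of $E$: if ${\cal T}B_E$ were not relatively weakly compact, then by Eberlein--\v{S}mulian there would be a sequence $(T_nx_n)$ with $x_n\in B_E$ and $T_n\in{\cal T}$ having no weakly convergent subsequence, and I would attempt a Kadec--Pe{\l}czy\'nski-type disjointification in the Banach lattice $E$ to extract a bounded disjoint sequence $(u_k)$ together with operators $T_{n_k}\in{\cal T}$ satisfying $\inf_k\|T_{n_k}u_k\|>0$, contradicting $\lim_k\sup_{T\in{\cal T}}\|Tu_k\|=0$. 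Making this disjointification respect the varying operator index $n_k$ is exactly where the real work lies, and it is the point at which one must check whether the collective \text{\rm Mwc} hypothesis is genuinely strong enough to force collective weak compactness.
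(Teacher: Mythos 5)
Your treatment of the collectively \text{\rm Lwc} half is correct and coincides with the paper's own argument: collective weak compactness of ${\cal T}$ is by definition relative weak compactness of the single set ${\cal T}B_X$, which is \text{\rm Lwc}, and \text{\rm Lwc} sets are relatively weakly compact (the paper cites \cite[Proposition 3.6.5]{Mey1991} for exactly this).

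For the collectively \text{\rm Mwc} half, the paper's proof is precisely the one-line duality reduction that you considered and then rejected: apply Theorem~\ref{CLW-MW-duality} to get ${\cal T}'\in\text{\bf Lwc}(Y',E')$, conclude ${\cal T}'$ is collectively weakly compact, and transfer back to ${\cal T}$. Your diagnosis that this transfer requires a ``collective Gantmacher theorem,'' and your suspicion that the collective \text{\rm Mwc} hypothesis may not be strong enough, are not just reasonable caution --- they are decisive. The transfer is false, and so is the \text{\rm Mwc} half of the corollary itself; the paper's own Example~\ref{dual-in-example} refutes it. Indeed, take $S_n\in\text{\rm L}(c_0)$, $S_nx=x_ne_1$, so that $S'_n\in\text{\rm L}(\ell^1)$, $S'_nf=f_1e_n$. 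The family $\{S'_n\}_{n=1}^\infty$ is collectively \text{\rm Mwc} directly from Definition~\ref{Mwc-sets}: if $(f^{(j)})$ is a norm bounded disjoint sequence in $\ell^1$, then at most one index $j$ can have $f^{(j)}_1\ne 0$, while $\sup_n\|S'_nf^{(j)}\|_1=|f^{(j)}_1|$, so this supremum is eventually $0$. (Equivalently, $\{S_n\}$ is collectively \text{\rm Lwc} and one applies Theorem~\ref{CLW-MW-duality}(i).) Yet, as the paper itself notes in that example, $\{S'_n\}$ is \emph{not} collectively weakly compact, since $e_n=S'_ne_1\in\bigcup_n S'_n(B_{\ell^1})$ and $\{e_n\}_{n=1}^\infty$ is not relatively weakly compact in $\ell^1$ by the Schur property. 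So the statement you were asked to prove cannot be proved in its \text{\rm Mwc} part, and the paper's proof breaks at exactly the step you flagged.

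The same example shows why your proposed rescue --- contradiction via Eberlein--\v{S}mulian plus a Kadec--Pe{\l}czy\'nski-type disjointification in $E$ --- was bound to fail: the sequence witnessing the failure of weak compactness is $(S'_ne_1)_n$, produced from the \emph{single} domain vector $x_n\equiv e_1$, so there is nothing to disjointify. Collective \text{\rm M}-weak compactness constrains $\sup_{T\in{\cal T}}\|Tx_n\|$ only along disjoint sequences $(x_n)$ and says nothing about the set $\{Tx:T\in{\cal T}\}$ for a fixed $x$; this asymmetry is exactly the gap between the single-operator theorem of Meyer-Nieberg and its attempted collective version. In summary: your \text{\rm Lwc} argument stands, your refusal to accept the duality reduction is vindicated, and the correct resolution is not to complete the proof but to observe that the corollary must be weakened to its \text{\rm Lwc} case (with the consequence that the claim about $\{T'_n\}$ being collectively weakly compact in Example~\ref{dual-in-example}, which is true, needs a direct justification rather than an appeal to this corollary).
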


\begin{proof}
It is a consequence of \cite[Proposition 3.6.5]{Mey1991}
for collectively \text{\rm Lwc} sets. Now, for collectively \text{\rm Mwc} sets,
the result follows from Theorem \ref{CLW-MW-duality}.
\end{proof}

\medskip
Next, we present a collective version of \cite[Theorem 2.6]{CCJ2014}.

\begin{theorem}\label{CCJ-collective}
{\em 
For every $X$ and $F$, $\text{\bf Lwc}(X,F)\subseteq\text{\bf aLim}(X,F)$.
Furthermore, the following conditions are equivalent.
\begin{enumerate}[(i)]
\item 
$F$ has order continuous norm.
\item
For every $X$, $\text{\bf aLim}(X,F)\subseteq\text{\bf Lwc}(X,F)$.
\item 
$\text{\bf aLim}(F)\subseteq\text{\bf Lwc}(F)$.
\end{enumerate}
}
\end{theorem}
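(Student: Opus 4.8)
The plan is to reduce everything to statements about a single bounded subset of $F$, using that a family $\mathcal{T}\subseteq\text{\rm L}(X,F)$ is collectively \text{\rm Lwc} (resp. collectively almost limited) exactly when the bounded set $A:=\mathcal{T}B_X$ is \text{\rm Lwc} (resp. almost limited), and to rely throughout on the characterization \cite[Theorem 5.63]{AB2006}: a bounded $A$ is \text{\rm Lwc} iff $\sup_{a\in A}|f_n(a)|\to 0$ for every disjoint sequence $(f_n)$ in $B_{F'}$. For the inclusion $\text{\bf Lwc}(X,F)\subseteq\text{\bf aLim}(X,F)$ it then suffices to show that every \text{\rm Lwc} set $A$ is almost limited. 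I would take a disjoint weak$^\ast$-null sequence $(f_n)$ in $F'$; being weak$^\ast$-convergent it is norm bounded, say $\|f_n\|\le c$, and for $c>0$ the sequence $(c^{-1}f_n)$ is disjoint in $B_{F'}$, so \cite[Theorem 5.63]{AB2006} yields $\sup_{a\in A}|f_n(a)|=c\,\sup_{a\in A}|c^{-1}f_n(a)|\to 0$. Hence $f_n\rightrightarrows 0(A)$, i.e. $A$ is almost limited.

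The engine for (i)$\Rightarrow$(ii) is the classical dual characterization of order continuity: $F$ has order continuous norm if and only if every norm bounded disjoint sequence in $F'$ is weak$^\ast$-null (cf. \cite{Mey1991,AB2006}). Granting this, let $F$ have order continuous norm, let $\mathcal{T}\in\text{\bf aLim}(X,F)$, and set $A=\mathcal{T}B_X$. To prove $A$ is \text{\rm Lwc} I apply \cite[Theorem 5.63]{AB2006} to an arbitrary disjoint $(f_n)$ in $B_{F'}$: by the characterization this $(f_n)$ is automatically weak$^\ast$-null, and since $A$ is almost limited it follows that $\sup_{a\in A}|f_n(a)|\to 0$. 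Thus $A$ is \text{\rm Lwc}, so $\mathcal{T}\in\text{\bf Lwc}(X,F)$; as $X$ is arbitrary, (ii) holds. The key idea is that almost limitedness already governs every disjoint sequence of $B_{F'}$ as soon as order continuity has collapsed ``disjoint and norm bounded'' into ``disjoint and weak$^\ast$-null''.

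The implication (ii)$\Rightarrow$(iii) is simply the case $X=F$. For (iii)$\Rightarrow$(i) I would pass to singletons: for $T\in\text{\rm L}(F)$ one has $\{T\}B_F=T(B_F)$, so $\{T\}\in\text{\bf aLim}(F)$ (resp. $\{T\}\in\text{\bf Lwc}(F)$) precisely when $T$ is an almost limited (resp. \text{\rm Lwc}) operator. Therefore (iii) yields the operator inclusion $\text{\rm aLim}(F)\subseteq\text{\rm Lwc}(F)$, and by the corresponding implication of \cite[Theorem 2.6]{CCJ2014} this forces $F$ to have order continuous norm, closing the cycle.

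The main obstacle is the dual characterization of order continuity invoked in (i)$\Rightarrow$(ii). This step cannot be reduced to individual operators: as Example \ref{example} shows, a family each of whose members is an \text{\rm Lwc} operator need not be collectively \text{\rm Lwc}, so the genuinely collective content of (i)$\Rightarrow$(ii) rests entirely on controlling all disjoint sequences of $B_{F'}$ simultaneously. It is exactly the weak$^\ast$-nullity of disjoint bounded sequences in $F'$ that upgrades ``almost limited'' to ``\text{\rm Lwc}'' for the whole set $A=\mathcal{T}B_X$ at once, and verifying (or correctly locating) this characterization is where I expect the only real difficulty to lie.
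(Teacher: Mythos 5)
Your proof is correct, and its skeleton is the same as the paper's: everything is reduced to the single bounded set $A={\cal T}B_X$, the cycle (i)$\Rightarrow$(ii)$\Rightarrow$(iii)$\Rightarrow$(i) is run with the same trivial step (ii)$\Rightarrow$(iii), and (iii)$\Rightarrow$(i) is settled exactly as in the paper, by passing to single operators and invoking \cite[Theorem 2.6]{CCJ2014}. The differences lie in how the two substantive steps are discharged. For $\text{\bf Lwc}(X,F)\subseteq\text{\bf aLim}(X,F)$ the paper routes through its collective duality theorem (Theorem \ref{CLW-MW-duality}), deducing $\sup_{T\in{\cal T}}\|T'f_n\|\to 0$ for disjoint $(f_n)\subset B_{F'}$ from collective M-weak compactness of ${\cal T}'$; you instead apply \cite[Theorem 5.63]{AB2006} to $A$ directly, with a harmless rescaling of the weak$^\ast$-null disjoint sequence. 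Since Theorem \ref{CLW-MW-duality} is itself proved from \cite[Theorem 5.63]{AB2006}, the engine is identical and your shortcut is sound. For (i)$\Rightarrow$(ii) the paper quotes \cite[Theorem 2.6]{CCJ2014} as a black box, while you re-prove the needed set-level fact from the dual characterization of order continuity (every norm bounded disjoint sequence in $F'$ is weak$^\ast$-null). That characterization is standard and correct; if you want it without chasing references, note that order continuity of $F$ is equivalent to every order bounded disjoint sequence of $F$ being norm null, i.e., to every order interval $[-x,x]$ being an \text{\rm Lwc} set, and \cite[Theorem 5.63]{AB2006} converts the latter into the statement that $|f_n|(x)=\sup_{|y|\le x}|f_n(y)|\to 0$ for every disjoint $(f_n)$ in $B_{F'}$ and every $x\in F_+$, which (applied to the disjoint bounded sequence $(|f_n|)$) is exactly weak$^\ast$-nullity of bounded disjoint sequences. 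What your route buys is self-containedness: the forward implication no longer depends on the precise formulation and hypotheses of the Chen--Chen--Ji theorem, and only (iii)$\Rightarrow$(i) still cites it. What the paper's route buys is brevity, and it displays the collective statement as a formal consequence of its own duality theorem together with known set-level results. Your closing observation is also the right one: after the reduction from ${\cal T}$ to ${\cal T}B_X$, nothing genuinely collective remains, and Example \ref{example} only shows that this reduction cannot be replaced by an operator-by-operator argument.
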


\begin{proof}
Let ${\cal T}$ be a collectively \text{\rm Lwc} subset of $\text{\rm L}(X,F)$.
By Theorem \ref{CLW-MW-duality}, ${\cal T}'\subseteq\text{\rm L}(F',X')$
is collectively \text{\rm Mwc}. Then, for every disjoint $(f_n)$ in $B_{F'}$,  
$$
   \lim\limits_{n\to\infty}\sup\{|f_n(Tx)|: x\in B_X,T\in{\cal T}\}=
   \lim\limits_{n\to\infty}\sup\limits_{T\in{\cal T}}\|T'f_n\|=
   \lim\limits_{n\to\infty}\sup\limits_{S\in{\cal T}'}\|Sf_n\|=0,
$$
and hence ${\cal T}B_X=\bigcup\limits_{T\in{\cal T}}T(B_X)$ 
is almost limited, i.e. ${\cal T}$ is collectively almost limited.

\medskip
(i)$\Longrightarrow$(ii)\ \
Let ${\cal T}\in\text{\bf aLim}(X,F)$. 
Since the set ${\cal T}B_X=\bigcup\limits_{T\in{\cal T}}T(B_X)$ is almost limited,
it is an \text{\rm Lwc} set by \cite[Theorem 2.6]{CCJ2014}, 
and hence ${\cal T}$ is collectively \text{\rm Lwc}.

\medskip
(ii)$\Longrightarrow$(iii)\ \
It is obvious.

\medskip
(iii)$\Longrightarrow$(i)\ \
Now, assume that every collectively almost limited subset of $\text{\rm L}(F)$
is collectively \text{\rm Lwc}. Then each almost limited operator is \text{\rm Lwc},
and hence the norm in $F$ is order continuous by \cite[Theorem 2.6]{CCJ2014}.
\end{proof}

\medskip
We turn our discussion to the domination problem 
for collectively compact sets.

\begin{definition}\label{collective domination}
{\em
Let ${\cal S}$ and ${\cal T}$ be two subsets of $\text{\rm L}_+(E,F)$.
Then ${\cal S}$ is dominated by ${\cal T}$ if, for each $S\in{\cal S}$
there exists $T\in{\cal T}$ such that $S\le T$.}
\end{definition}

\noindent
The proofs of the following two propositions are straightforward modifications
of proofs of \cite[Theorem 5.11]{AB2006} and \cite[Lemma 5.12]{AB2006}
respectively.

\begin{proposition}\label{|sigma|-totally bounded}
{\em
Let ${\cal S}\subset\text{\rm L}_+(E,F)$ be dominated by $\{T\}$ for some 
$T\in\text{\rm L}_+(E,F)$.
If $T[0,x]$ is $|\sigma|(F,F')$-totally bounded
for each $x\in E_+$, then 
$\bigcup\limits_{S\in{\cal S}}S[0,x]$ is likewise $|\sigma|(F,F')$-totally bounded
for each $x\in E_+$.
}
\end{proposition}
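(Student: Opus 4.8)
The plan is to mirror the proof of \cite[Theorem 5.11]{AB2006}, keeping track of the whole family ${\cal S}$ and exploiting that every estimate is governed by the single dominating operator $T$. First I would fix $x\in E_+$ and reduce the claim to a single seminorm: since the seminorms $\rho_f(z)=f(|z|)$ (for $f\in F'_+$) generate $|\sigma|(F,F')$ and $\rho_{f_1+f_2}=\rho_{f_1}+\rho_{f_2}$ dominates each $\rho_{f_i}$, it suffices to produce, for each $f\in F'_+$ and each $\varepsilon>0$, a finite $\rho_f$-$\varepsilon$-net for $\bigcup_{S\in{\cal S}}S[0,x]$. The domination $0\le S\le T$ immediately gives $0\le Sy\le Ty\le Tx$ whenever $0\le y\le x$ and $S\in{\cal S}$, so the union lies in the order interval $[0,Tx]$ and is in particular norm bounded; moreover $|S(y-y')|\le S|y-y'|\le T|y-y'|$, whence $\rho_f(Sy-Sy')\le (T'f)(|y-y'|)$ for every $S\in{\cal S}$ simultaneously.

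Next I would feed the hypothesis into the net construction of \cite[Theorem 5.11]{AB2006}. Applying the $|\sigma|(F,F')$-total boundedness of $T[0,x]$ to the functional $f$, I would extract a finite configuration $y_1,\dots,y_m\in[0,x]$ together with the auxiliary centers that appear in that proof. The guiding inequality is that, for $0\le v$ in $F$, one has $(Sy-v)^+\le (Ty-v)^+$ because $Sy\le Ty$; combined with the estimate from the previous paragraph this bounds $\rho_f(Sy-\text{center})$ entirely in terms of $T$, $f$, $x$ and $\varepsilon$. The decisive observation for the collective statement is that the $y_i$ and the resulting centers are produced from $T[0,x]$ and the order relation $S\le T$ alone, so a single finite net is forced to serve every member of the dominated family at once.

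The hard part is precisely this uniformity in $S$. In the one-operator argument the natural $\varepsilon$-net is $\{Sy_1,\dots,Sy_m\}$, whose points depend on the chosen $S$; for the union this is useless, since ${\cal S}$ may be infinite. The modification I expect to carry the proof is to replace these $S$-dependent points by centers built from the dominating operator $T$ (via the lattice manipulations of \cite[Lemma 5.12]{AB2006} carried out inside $\text{\rm sol}(T[0,x])$), and then to verify that the bound $\rho_f(Sy-\text{center})<\varepsilon$ holds simultaneously for all $S\in{\cal S}$ and all $y\in[0,x]$. Granting this uniform estimate, the finitely many centers form a $\rho_f$-$\varepsilon$-net for $\bigcup_{S\in{\cal S}}S[0,x]$; as $f\in F'_+$ and $\varepsilon>0$ were arbitrary, the union is $|\sigma|(F,F')$-totally bounded, as required.
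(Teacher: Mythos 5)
Your outline stalls exactly where the whole content of the proposition lies: the finite set of ``centers built from $T$'' is never exhibited, and the decisive uniform bound $\rho_f(Sy-\text{center})<\varepsilon$ is not proved but granted (``Granting this uniform estimate\dots''). Everything before that point --- the reduction to a single seminorm $\rho_f$, the inclusion $\bigcup_{S\in{\cal S}}S[0,x]\subseteq[0,Tx]$, the inequalities $\rho_f(Sy-Sy')\le (T'f)(|y-y'|)$ and $(Sy-v)^+\le(Ty-v)^+$ --- is correct but carries no force, since none of it uses the total boundedness of $T[0,x]$ in an essential way. Moreover, the tool you nominate to fill the hole, the lattice manipulations behind \cite[Lemma 5.12]{AB2006}, is built to produce a one-sided estimate $\|(Sx-u)^+\|\le\varepsilon$ from \emph{norm} total boundedness of $T(A)$; it does not give the two-sided $\rho_f$-approximation needed here. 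Indeed, the natural attempt with your ingredients --- approximate $Sy$ by $Sy\wedge Ty_i$, using $f\bigl((Sy-Ty_i)^+\bigr)\le f\bigl((Ty-Ty_i)^+\bigr)<\varepsilon$ --- merely reduces the problem to $\rho_f$-total boundedness of the order intervals $[0,Ty_i]$, which is not a formal triviality: order intervals are in general \emph{not} $\rho_f$-totally bounded (take $[0,\mathbf{1}]$ in $L^1$ and $f=\mathbf{1}$, where $\rho_f$ is the norm), and extracting it from your hypothesis requires precisely the machinery your outline avoids. So the gap is genuine and cannot be closed by local lattice algebra.

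The missing tool is the disjoint-sequence characterization of the absolute weak topology (\cite[Theorem 4.37]{AB2006}): a norm bounded set $A\subseteq F$ is $|\sigma|(F,F')$-totally bounded if and only if every disjoint sequence in $\text{\rm sol}(A)$ converges weakly to zero. This is what the proof of \cite[Theorem 5.11]{AB2006} actually runs on, and it is what makes the paper's claim that the collective version is a ``straightforward modification'' true: if $0\le S\le T$ for every $S\in{\cal S}$, then $0\le Sy\le Ty$ for $y\in[0,x]$ gives the single inclusion
$\text{\rm sol}\left(\bigcup_{S\in{\cal S}}S[0,x]\right)\subseteq\text{\rm sol}(T[0,x])$,
so every disjoint sequence in the left-hand solid hull lies in $\text{\rm sol}(T[0,x])$ and is weakly null by the hypothesis on $T$; since $\bigcup_{S\in{\cal S}}S[0,x]\subseteq[0,Tx]$ is norm bounded, the characterization applies again and yields its $|\sigma|(F,F')$-total boundedness. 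No $\varepsilon$-nets and no centers are needed, and the uniformity over ${\cal S}$ that blocked your argument is absorbed, at zero cost, into one inclusion of solid hulls.
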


\begin{proposition}\label{collective domination}
{\em
Let ${\cal S}\subset\text{\rm L}_+(E,F)$ be dominated by $\{T\}$ for some 
$T\in\text{\rm L}_+(E,F)$.
If $T(A)$ is 
a norm totally bounded set, then for each $\varepsilon>0$
there exists some $u\in F_+$ such that
$\sup\limits_{S\in{\cal S}}\bigl\|(Sx-u)^+\bigl\|\le\varepsilon$
holds for all $x\in A$.
}
\end{proposition}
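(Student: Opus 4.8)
The plan is to imitate the classical construction from the proof of \cite[Lemma 5.12]{AB2006}, the point being that the element $u$ it produces depends only on $T$ and on a finite net for $T(A)$, so that a single $u$ serves every $S\in{\cal S}$ at once. Fix $\varepsilon>0$. Since $T(A)$ is norm totally bounded, I would first pick finitely many points $x_1,\dots,x_n\in A$ so that $\{Tx_1,\dots,Tx_n\}$ is an $\varepsilon$-net for $T(A)$; that is, for every $x\in A$ there is an index $i$ with $\|Tx-Tx_i\|\le\varepsilon$.

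Next I would put $u=\bigvee_{i=1}^{n}Tx_i$. As $T\in\text{\rm L}_+(E,F)$ and the $x_i$ lie in the positive cone, each $Tx_i\ge 0$, so $u\in F_+$; this is the one element that will satisfy the conclusion for the whole family ${\cal S}$. The decisive feature is that $u$ is manufactured from $T$ alone, with no reference to the individual operators $S$.

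Now take any $S\in{\cal S}$ and any $x\in A$, and choose $i$ with $\|Tx-Tx_i\|\le\varepsilon$. Using the domination $S\le T$ (which gives $Sx\le Tx$ for positive $x$), the monotonicity of $y\mapsto y^+$, and $u\ge Tx_i$, I would estimate
$$
   (Sx-u)^+\le (Tx-u)^+\le (Tx-Tx_i)^+ ,
$$
and then, since $0\le (Tx-Tx_i)^+\le|Tx-Tx_i|$ in the Banach lattice $F$,
$$
   \|(Sx-u)^+\|\le\|(Tx-Tx_i)^+\|\le\|Tx-Tx_i\|\le\varepsilon .
$$
Because this bound holds for every $S\in{\cal S}$ with the same $u$, passing to the supremum over $S\in{\cal S}$ yields $\sup_{S\in{\cal S}}\|(Sx-u)^+\|\le\varepsilon$ for all $x\in A$, as required.

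I do not expect a serious obstacle here: the entire content is the observation that the classical argument uses $S$ only through the inequality $Sx\le Tx$, so both the net and the resulting $u$ are automatically uniform in $S\in{\cal S}$, and the collective statement drops out. The only delicate point is the positivity bookkeeping, namely ensuring $u\in F_+$ (guaranteed by $T\ge 0$ together with $x_i$ positive) and invoking $Sx\le Tx$ only for positive $x$, which is why the relevant set $A$ should be understood inside $E_+$.
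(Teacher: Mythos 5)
Your proof is correct and is exactly the argument the paper intends: the paper's own ``proof'' merely states that the result follows by a straightforward modification of the proof of \cite[Lemma~5.12]{AB2006}, and your writeup is precisely that modification --- the classical construction $u=\bigvee_{i=1}^{n}Tx_i$ from a finite net for $T(A)$, together with the key observation that $u$ depends only on $T$ and therefore serves every $S\in{\cal S}$ simultaneously. Your closing caveat that $A$ must be understood inside $E_+$ (so that $Sx\le Tx$ is available and $u\ge 0$) correctly identifies the implicit hypothesis inherited from the cited lemma.
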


\noindent
The author does not know whether or not a single operator $T$
in Proposition \ref{|sigma|-totally bounded} (in Proposition \ref{collective domination}) 
can be replaced by a set ${\cal T}\subset\text{\rm L}_+(E,F)$ dominating ${\cal S}$
such that $\bigcup\limits_{T\in{\cal T}}T[0,x]$ is $|\sigma|(F,F')$-totally bounded
(resp., $\bigcup\limits_{T\in{\cal T}}T(A)$ is norm totally bounded).

\noindent
Replacing \cite[Theorem 5.11]{AB2006} by Proposition \ref{|sigma|-totally bounded}
and \cite[Lemma 5.12]{AB2006} by Proposition \ref{collective domination} in the proof
of \cite[Theorem 5.13]{AB2006} gives a collective extension of the
Aliprantis--Burkinshaw theorem as follows.

\begin{theorem}\label{AB-S3}
{\em
Let ${\cal S}\subset\text{\rm L}_+(E)$ be dominated by $\{T\}$ for some 
$T\in\text{\rm K}_+(E)$, then the set $\{S^3:S\in{\cal S}\}$
is collectively compact.
}
\end{theorem}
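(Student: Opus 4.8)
The plan is to transplant the Aliprantis--Burkinshaw argument for \cite[Theorem 5.13]{AB2006} into the collective setting, systematically reading Proposition~\ref{|sigma|-totally bounded} for \cite[Theorem 5.11]{AB2006} and Proposition~\ref{collective domination} for \cite[Lemma 5.12]{AB2006}. By definition, $\{S^3:S\in{\cal S}\}$ is collectively compact precisely when $\bigcup_{S\in{\cal S}}S^3(B_E)$ is relatively norm compact, which, $E$ being complete, means norm totally bounded. Since $S^3x=S^3x^+-S^3x^-$ with $x^\pm\in B_E\cap E_+=:B_E^+$, I would first reduce the problem to showing that $C:=\bigcup_{S\in{\cal S}}S^3(B_E^+)$ is norm totally bounded; then $\bigcup_{S\in{\cal S}}S^3(B_E)\subseteq C-C$ is totally bounded as well.

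Next I would extract from the domination $0\le S\le T$ with $T\in\text{\rm K}_+(E)$ the two uniform facts that underlie the scalar proof. First, $\|S\|\le\|T\|$ for every $S\in{\cal S}$, so the family is uniformly bounded. Second, since $T(B_E^+)$ is norm totally bounded, Proposition~\ref{collective domination} applied with $A=B_E^+$ produces, for each $\varepsilon>0$, one common $u=u_\varepsilon\in E_+$ with $\sup_{S\in{\cal S}}\|(Sx-u)^+\|\le\varepsilon$ for all $x\in B_E^+$ (uniform semicompactness). Third, $T[0,v]$ is norm totally bounded, hence $|\sigma|(E,E')$-totally bounded, for each $v\in E_+$, so Proposition~\ref{|sigma|-totally bounded} gives that $\bigcup_{S\in{\cal S}}S[0,v]$ is $|\sigma|(E,E')$-totally bounded for each fixed $v\in E_+$.

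With these in hand I would follow the chain of estimates in \cite[Theorem 5.13]{AB2006}. Decomposing $Sx=Sx\wedge u+(Sx-u)^+$ and iterating $S$, the scalar proof separates at each stage an order-interval image from a norm-small remainder and, after the third copy of $S$, converts the accumulated $|\sigma|(E,E')$-total-boundedness data into norm total boundedness of $C$. The one genuinely collective adjustment is that the order-interval endpoints occurring in the scalar proof, such as $Su$ and $S^2u$, depend on $S$; I would replace them by the common dominating endpoints $Tu,\,T^2u$, using $Su\le Tu$ and hence $S[0,Su]\subseteq S[0,Tu]$, so that Proposition~\ref{|sigma|-totally bounded} can be invoked at the fixed elements $v=Tu,\,T^2u$. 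Because the element $u_\varepsilon$ and all these endpoints are common to the family, every constant in the estimates depends only on $\|T\|$, $\varepsilon$, and $u_\varepsilon$, never on the individual $S$, so the resulting finite covers serve for all of $C$ at once.

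The main obstacle is exactly this uniformity bookkeeping, concentrated in the last stage of the scalar argument where the $|\sigma|(E,E')$-totally bounded order-interval images are turned into a norm-totally bounded set: one must check that this delicate conversion still closes when the single operator is replaced by the family ${\cal S}$. The two collective Propositions are designed to guarantee this, since they already phrase their conclusions with $\bigcup_{S\in{\cal S}}$ and $\sup_{S\in{\cal S}}$, and the two points needing explicit care---that $u_\varepsilon$ is common to all $S$ and that the $S$-dependent order intervals are absorbed into common ones via domination---are handled as above. Once the uniformity is secured, the scalar conversion applies verbatim and yields collective compactness of $\{S^3:S\in{\cal S}\}$.
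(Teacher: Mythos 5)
Your proposal is correct and is essentially the paper's own proof: the paper establishes Theorem~\ref{AB-S3} precisely by substituting Proposition~\ref{|sigma|-totally bounded} for \cite[Theorem 5.11]{AB2006} and Proposition~\ref{collective domination} for \cite[Lemma 5.12]{AB2006} in the Aliprantis--Burkinshaw proof of \cite[Theorem 5.13]{AB2006}, which is exactly your plan. Your uniformity bookkeeping (the common $u_\varepsilon$ from Proposition~\ref{collective domination}, the replacement of the $S$-dependent interval endpoints by the common bounds $Tu$, $T^2u$ via $Su\le Tu$, and the bound $\|S\|\le\|T\|$) spells out correctly the details that the paper leaves implicit in this substitution.
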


\noindent
A similar modification of the proof of \cite[Theorem 5.20]{AB2006} gives 
the following collective extension of the Dodds--Fremlin theorem.

\begin{theorem}\label{DF-Domi}
{\em
Let the norms in $E'$ and in $F$ be order continuous.
If ${\cal S}\subset\text{\rm L}_+(E,F)$ is dominated by $\{T\}$ for some 
$T\in\text{\rm K}_+(E,F)$, then ${\cal S}\in\text{\bf K}(E,F)$.
}
\end{theorem}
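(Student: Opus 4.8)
The plan is to imitate the proof of the Dodds--Fremlin theorem \cite[Theorem 5.20]{AB2006} but to track all operators in $\cal S$ simultaneously, using the collective versions of the auxiliary results established above in place of their single-operator counterparts. The Dodds--Fremlin theorem asserts that if $E'$ and $F$ have order continuous norms and $0\le S\le T$ with $T$ compact, then $S$ is compact; the classical proof passes through the fact that $T$ maps order intervals to norm-totally-bounded (equivalently, under order continuity, $|\sigma|(F,F')$-totally bounded) sets and then controls the ``tails'' of $S$ via an auxiliary estimate. I would reproduce that scheme with the single compact dominating operator $T\in\text{\rm K}_+(E,F)$ and a whole dominated family ${\cal S}\subset\text{\rm L}_+(E,F)$.

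Concretely, first I would fix $A=B_E$ and note that since $T$ is compact, $T(A)$ is norm totally bounded, so Proposition \ref{collective domination} applies uniformly across ${\cal S}$: for each $\varepsilon>0$ there is a single $u\in F_+$ with $\sup_{S\in{\cal S}}\|(Sx-u)^+\|\le\varepsilon$ for all $x\in A$. This is exactly the uniform tail control needed to handle the whole family at once. Second, using order continuity of the norm of $F$ together with compactness of $T$, I would invoke Proposition \ref{|sigma|-totally bounded} to conclude that $\bigcup_{S\in{\cal S}}S[0,x]$ is $|\sigma|(F,F')$-totally bounded for each $x\in E_+$; under order continuity of $F$ this $|\sigma|$-total boundedness upgrades, on the solid sets in question, to genuine norm total boundedness of the relevant pieces. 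Third, combining the uniform truncation estimate with the total boundedness of the truncated parts, I would show that ${\cal S}B_E=\bigcup_{S\in{\cal S}}S(B_E)$ is covered, up to an arbitrarily small uniform norm error $\varepsilon$, by a norm totally bounded set; letting $\varepsilon\to 0$ gives that ${\cal S}B_E$ is itself norm totally bounded, i.e. relatively compact. Hence ${\cal S}\in\text{\bf K}(E,F)$.

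The role of order continuity of $E'$ enters exactly as in the classical argument: it is what guarantees, via the Dodds--Fremlin duality machinery, that the order intervals $[0,x]$ are treated correctly on the domain side so that the $|\sigma|(F,F')$-estimates produced by Proposition \ref{|sigma|-totally bounded} can be assembled into a norm statement. In the collective setting no new phenomenon appears here, since order continuity of $E'$ and $F$ are hypotheses shared verbatim with \cite[Theorem 5.20]{AB2006}, and the single-operator lemmas it relies on have already been promoted to families in Propositions \ref{|sigma|-totally bounded} and \ref{collective domination}.

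The main obstacle I anticipate is not conceptual but bookkeeping: ensuring that every quantity chosen in the classical proof that a priori depends on the individual operator $S$ can in fact be chosen uniformly over all $S\in{\cal S}$. The key point that makes this go through is that the dominating operator $T$ is a \emph{single} compact operator, so $T(B_E)$ and each $T[0,x]$ supply one fixed totally bounded set, and the uniformity in $S$ is then delivered precisely by the ``$\sup_{S\in{\cal S}}$'' formulations of Propositions \ref{|sigma|-totally bounded} and \ref{collective domination}. Provided one is careful that the vector $u$ and the finite nets furnished by these propositions do not secretly depend on $S$, the argument of \cite[Theorem 5.20]{AB2006} carries over line by line, which is why I would present the proof as a straightforward modification rather than reprove the Dodds--Fremlin theorem from scratch.
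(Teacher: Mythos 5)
Your proposal matches the paper's proof exactly: the paper establishes Theorem~\ref{DF-Domi} precisely by running the argument of \cite[Theorem 5.20]{AB2006} with \cite[Theorem 5.11]{AB2006} replaced by Proposition~\ref{|sigma|-totally bounded} and \cite[Lemma 5.12]{AB2006} replaced by Proposition~\ref{collective domination}, which is the same substitution scheme you describe. Your sketch is in fact more detailed than the paper's (the paper only states that ``a similar modification'' of the classical proof works), and your observation that the uniformity over $S\in{\cal S}$ comes from the single dominating compact operator $T$ together with the $\sup_{S\in{\cal S}}$ formulations of the two propositions is exactly the point the paper relies on.
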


\noindent
The author does not know whether or not the operator $T\in\text{\rm K}_+(E,F)$
in Theorems \ref{AB-S3} and \ref{DF-Domi} can be replaced by 
a collectively compact set ${\cal T}\subset\text{\rm L}_+(E,F)$ dominating ${\cal S}$.

\medskip
Finally, we mention briefly the domination problem for collectively \text{\rm Lwc} sets.
Whereas collective extensions of the Aliprantis--Burkinshaw theorem
\cite[Theorem~5.11]{AB2006} and of the Dodds--Fremlin theorem
\cite[Theorem~5.20]{AB2006} are not sufficiently clear yet, 
the domination problem for collectively \text{\rm Lwc} sets is much easier
and has the complete solution, as follows.

\begin{theorem}\label{LW-Domi}
{\em Let
${\cal S}\subset\text{\rm L}_+(E,F)$ be dominated by ${\cal T}\subset\text{\rm L}_+(E,F)$.
\begin{enumerate}[(i)]
\item 
If ${\cal T}$ is collectively \text{\rm Lwc}, then ${\cal S}$ is collectively \text{\rm Lwc}.
\item 
If ${\cal T}$ is collectively \text{\rm Mwc}, then ${\cal S}$ is collectively \text{\rm Mwc}.
\end{enumerate}
}
\end{theorem}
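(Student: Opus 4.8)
The plan is to base both parts on the single pointwise lattice estimate that domination supplies for positive operators: if $0\le S\le T$ in $\text{\rm L}_+(E,F)$, then for every $x\in E$ one has $|Sx|\le S|x|\le T|x|$, and hence $\|Sx\|\le\|T|x|\|$ by monotonicity of the lattice norm. Everything reduces to feeding disjoint sequences (resp.\ images of the unit ball) through this inequality. I would prove (ii) first, as it is the more computational of the two, and then obtain (i) either directly by the same device or, more slickly, by dualizing (ii) through Theorem~\ref{CLW-MW-duality}.

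For (ii), first note that ${\cal S}$ is bounded: given $S\in{\cal S}$, choose $T\in{\cal T}$ with $0\le S\le T$; then $\|S\|\le\|T\|\le\sup_{T\in{\cal T}}\|T\|<\infty$, since a collectively \text{\rm Mwc} set is bounded by Definition~\ref{Mwc-sets}. Now let $(x_n)$ be a norm bounded disjoint sequence in $E$. The sequence $(|x_n|)$ is again disjoint and norm bounded, because $\||x_n|\|=\|x_n\|$ and $|x_n|\wedge|x_m|=0$ for $n\ne m$. For each $S\in{\cal S}$ and its dominating $T\in{\cal T}$ the estimate above gives $\|Sx_n\|\le\|T|x_n|\|$, so that
\[
   \sup_{S\in{\cal S}}\|Sx_n\|\le\sup_{T\in{\cal T}}\|T|x_n|\|.
\]
The right-hand side tends to $0$ as $n\to\infty$, because ${\cal T}$ is collectively \text{\rm Mwc} and $(|x_n|)$ is a norm bounded disjoint sequence; hence $\sup_{S\in{\cal S}}\|Sx_n\|\to0$, and ${\cal S}$ is collectively \text{\rm Mwc}.

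For (i) I would argue directly. For $x\in B_E$ one has $|x|\in B_E$, and for $S\in{\cal S}$ with dominating $T\in{\cal T}$ the inequality $|Sx|\le T|x|$ places $Sx$ in the order interval $[-T|x|,T|x|]$. Since $T|x|\ge0$ lies in $T(B_E)\subseteq{\cal T}B_E$, this interval is contained in $\text{\rm sol}({\cal T}B_E)$, whence ${\cal S}B_E\subseteq\text{\rm sol}({\cal T}B_E)$. As ${\cal T}B_E$ is an \text{\rm Lwc} set, so is its solid hull (both having the same disjoint sequences in their solid hulls), and therefore so is the subset ${\cal S}B_E$; thus ${\cal S}$ is collectively \text{\rm Lwc}. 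Alternatively, (i) follows from (ii) by duality: domination is preserved by adjoints, as $0\le S\le T$ forces $0\le S'\le T'$ because the adjoint of a positive operator is positive. Then Theorem~\ref{CLW-MW-duality}(i) turns the collective \text{\rm Lwc}-ness of ${\cal T}$ into collective \text{\rm Mwc}-ness of ${\cal T}'$, part (ii) yields collective \text{\rm Mwc}-ness of ${\cal S}'$, and a second application of Theorem~\ref{CLW-MW-duality}(i) returns collective \text{\rm Lwc}-ness of ${\cal S}$.

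No step presents a genuine obstacle, which matches the paper's remark that the \text{\rm Lwc} domination problem ``has the complete solution.'' The only points needing a little care are routine: passing to $|x_n|$ to keep the sequence positive while preserving disjointness and norm bounds, the monotonicity $0\le a\le b\Rightarrow\|a\|\le\|b\|$ of the lattice norm, and — for the direct proof of (i) — the observation that an \text{\rm Lwc} set and its solid hull share the same disjoint sequences, so solidity costs nothing. The easiest place to be sloppy is the bookkeeping that confirms ${\cal S}$ is bounded before invoking the defining limits.
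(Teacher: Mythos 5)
Your proof is correct, and it differs from the paper's in a structurally interesting way. For part (i) you and the paper take essentially the same route: from $|Sx|\le S|x|\le T|x|$ one gets ${\cal S}B_E\subseteq\text{\rm sol}({\cal T}B_E)$, and then one uses stability of the \text{\rm Lwc} property under solid hulls and subsets (the paper routes this through $B_E^+$ and cites \cite[Corollary 3.6.4]{Mey1991}, while you justify solid-hull stability directly via $\text{\rm sol}(\text{\rm sol}(A))=\text{\rm sol}(A)$; both are fine). The genuine difference is in part (ii): the paper proves it by duality --- domination passes to adjoints, Theorem~\ref{CLW-MW-duality} converts collective \text{\rm Mwc}-ness of ${\cal T}$ into collective \text{\rm Lwc}-ness of ${\cal T}'$, part (i) applies to the dominated adjoint family, and the ``iff'' in Theorem~\ref{CLW-MW-duality} brings the conclusion back to ${\cal S}$ --- whereas you prove (ii) by the direct estimate $\sup_{S\in{\cal S}}\|Sx_n\|\le\sup_{T\in{\cal T}}\|T|x_n|\|$ applied to the disjoint bounded sequence $(|x_n|)$. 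Your route is more elementary and self-contained: it makes (ii) independent of both the duality theorem and part (i), at essentially no cost (the estimate is two lines, and your boundedness bookkeeping via $\|S\|\le\|T\|$ is correct). Your alternative derivation of (i) from (ii) by duality is the exact mirror image of the paper's chain and is equally valid, since Theorem~\ref{CLW-MW-duality} is an equivalence and $0\le S\le T$ indeed forces $0\le S'\le T'$.
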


\begin{proof}
(i)\ \ 
Let ${\cal T}$ be a collectively \text{\rm Lwc} set.
Since ${\cal S}$ is dominated by ${\cal T}$, then
$Sx\in\bigcup\limits_{T\in{\cal T}}T(B^+_E)$ for each $x\in B^+_E$.
As $\bigcup\limits_{S\in{\cal S}}S(B^+_E)\subseteq
\text{\rm sol}\left(\bigcup\limits_{T\in{\cal T}}T(B_E)\right)$,
and $\bigcup\limits_{T\in{\cal T}}T(B_E)$ is an \text{\rm Lwc} subset of $F$,
therefore, $\bigcup\limits_{S\in{\cal S}}S(B^+_E)$, and hence
$\bigcup\limits_{S\in{\cal S}}S(B_E)$ is also \text{\rm Lwc}
by \cite[Corollary 3.6.4]{Mey1991}.

\medskip
(ii) Apply Theorem \ref{CLW-MW-duality}.
\end{proof}


\addcontentsline{toc}{section}{KAYNAKLAR}

\bibliographystyle{plain}
\end{document}